\date{ }
\newcommand{\ga}{\Gamma}
\newcommand{\mat}{\mathbb}
\newcommand{\al}{\alpha}
\newtheorem{theorem}{Theorem}[section]
\newtheorem{lemma}[theorem]{Lemma}
\newtheorem{problem}[theorem]{Problem}
\title{\bf On some Frobenius groups with the same prime graph as the almost simple group ${\rm {\bf PGL(2,49)}}$}
\author{{\bf Ali Mahmoudifar}\\
Department of Mathematics, Tehran-North Branch,\\ Islamic Azad University, Tehran, Iran\\
e-mail: alimahmoudifar@gmail.com}
\begin{document}
\maketitle
\begin{abstract}
The prime graph of a finite group $G$ is denoted by
$\ga(G)$ whose vertex set is $\pi(G)$ and two distinct primes $p$ and $q$ are adjacent in $\ga(G)$, whenever $G$ contains an element with order $pq$. We say that $G$ is unrecognizable by prime graph if there is a finite group $H$ with $\ga(H)=\ga(G)$, in while $H\not\cong G$. In this paper, we consider finite groups with the same prime graph as the almost simple group $\textrm{PGL}(2,49)$. Moreover, we construct some Frobenius groups
whose their prime graph coincide with $\ga(\textrm{PGL}(2,49))$, in particular, we get that $\textrm{PGL}(2,49)$ is unrecognizable by prime graph.
\end{abstract}
{\bf 2000 AMS Subject Classification}:  $20$D$05$, $20$D$60$,
20D08.
\\
{\bf Keywords :} almost simple group, prime graph, Frobenius group, element order. pepole
\section{Introduction}
Let $\mat{N}$ denotes the set of natural numbers. If $n\in \mat{N}$,
then we denote by $\pi(n)$, the set of all prime divisors of $n$.
Let $G$ be a finite group. The set $\pi(|G|)$ is denoted by
$\pi(G)$. Also the set of element orders of $G$ is denoted by
$\pi_e(G)$. We denote by $\mu(S)$, the maximal numbers of $\pi_e(G)$ under the divisibility
relation. The \textit{prime graph} of $G$ is a graph whose vertex
set is $\pi(G)$ and two distinct primes $p$ and $q$ are joined by an
edge (and we write $p\sim q$), whenever $G$ contains an element of
order $pq$. The prime graph of $G$ is denoted by $\ga(G)$.
A finite group $G$ is called
\textit{unrecognizable by prime graph} if for every finite group $H$ such that $\ga(H) = \ga(G)$, however
$H\ncong G$.

In \cite{15}, it is proved that if
$p$ is a prime number which is not a Mersenne or Fermat prime and $p \not= 11$, $19$ and
$\ga(G) = \ga(\textrm{PGL}(2, p))$, then $G$ has a unique nonabelian composition factor which is
isomorphic to $\textrm{PSL}(2, p)$ and if $p = 13$, then $G$ has a unique nonabelian composition
factor which is isomorphic to $\textrm{PSL}(2,13)$ or $\textrm{PSL}(2,27)$.
In \cite{pgl-spec}, it is proved that if $q = p^{\al}$, where $p$ is a prime and $\al > 1$, then $\textrm{PGL}(2, q)$
is uniquely determined by its element orders. Also in \cite{pgl}, it is proved that if $q = p^{\al}$, where $p$ is an odd prime and $\al$ is an odd natural number, then $\textrm{PGL}(2, q)$ is uniquely determined by its prime graph.
However, in this paper as the main result we prove that the almost simple group $\textrm{PGL}(2,49)$  is unrecognizable by prime graph. Also, finally we put a question about the existence of Frobenius groups with the same prime graph as the almost simple groups $\textrm{PGL}(2, q)$. 
\section{Preliminary Results}
\begin{lemma}\label{maza}(\cite{maza})
Let $G$ be a finite group and $N \unlhd G$ such that $G/N$ is a
Frobenius group with kernel $F$ and cyclic complement $C$. If
$(|F|,|N|)=1$ and $F$ is not contained in $NC_G(N)/N$, then
$p|C|\in\pi_e(G)$ for some prime divisor $p$ of $|N|$.
\end{lemma}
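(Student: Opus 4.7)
Let $H$ and $K$ be the full preimages in $G$ of $F$ and $C$, so $H\unlhd G$ with $H/N=F$ and $K/N=C$. Because $(|F|,|N|)=1$, Schur--Zassenhaus yields a complement $F_0\leq H$ with $H=N\rtimes F_0$ and $F_0\cong F$, unique up to $N$-conjugation; the hypothesis $F\not\subseteq NC_G(N)/N$ then reads $[F_0,N]\neq 1$. Pick $k\in K$ lifting a generator of $C$. Since $K$ normalises $H$ and permutes its Schur--Zassenhaus complements, all of which are $N$-conjugate, after replacing $k$ by $nk$ for suitable $n\in N$ I may assume that $k$ normalises $F_0$. Conjugation by $k$ on $F_0$ then realises the Frobenius action of $\bar k\in C$ on $F$, which is fixed-point-free, so $C_{F_0}(k)=1$.

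\textbf{Reduction.} Since $F_0$ does not centralise $N$, some $G$-chief factor inside $N$ receives a nontrivial $F_0$-action. Passing to $G$ modulo everything strictly below such a factor preserves the Frobenius quotient, the coprimality, and the failure of $F_0$ to centralise $N$, so I may assume $N$ itself is a minimal normal subgroup of $G$. The principal case is that $N$ is elementary abelian of exponent $p$ for some prime $p\mid|N|$; a non-abelian minimal normal $N$ is handled by a parallel argument on a suitable characteristic section.

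\textbf{Constructing the element.} If $k^{|C|}\neq 1$ then $k^{|C|}\in N$ has order $p$, so $k$ itself has order $p|C|$ and we are done. Otherwise $k$ has order exactly $|C|$ and $M:=F_0\rtimes\langle k\rangle$ maps isomorphically onto $G/N$. For $v\in N$ one then has $(vk)^{|C|}=N_{\langle k\rangle}(v)$, the $\langle k\rangle$-norm of $v$, and any $v$ with nontrivial norm yields $|vk|=p|C|$ because $N$ has exponent $p$. The genuine obstacle is the case where this norm vanishes identically on $N$, which forces $k$ to act trivially on $N$ together with $p\mid|C|$; here one examines instead $(vhk)^{|C|}$ with $h\in F_0\setminus\{1\}$, so that conjugation by $hk$ on $N$ acts as the nontrivial automorphism induced by $h$. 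A careful choice of $h$ and of an $F_0$-invariant irreducible summand of $N$ (available by Maschke via $(|F|,|N|)=1$), combined with the fixed-point-free action of $k$ on $F_0$, then yields a $v$ with $(vhk)^{|C|}\neq 1$, producing an element of order exactly $p|C|$ in $G$. I expect this final case analysis to be the main obstacle of the proof.
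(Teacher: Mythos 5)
The paper offers no proof of this lemma: it is quoted from Mazurov \cite{maza}, so there is nothing internal to compare against, and your proposal must stand on its own. Its setup is sound --- the Schur--Zassenhaus complement $F_0$ with $[F_0,N]\neq 1$, the choice of $k$ normalizing $F_0$ so that $C_{F_0}(k)=1$, and the identity $(vk)^{|C|}=T(v)$ with $T=\sum_{i=0}^{|C|-1}k^i$ --- but it fails at the two places where the content of the lemma actually lives. The reduction is the lesser problem: if the chief factor $N_i/N_{i-1}$ on which $F_0$ acts nontrivially has $N_i\subsetneq N$, then after passing to $G/N_{i-1}$ and renaming $N_i/N_{i-1}$ as $N$, the quotient by the new $N$ is $G/N_i$, which is \emph{not} the Frobenius group $FC$ (it has $N/N_i$ as a normal subgroup), so ``preserves the Frobenius quotient'' is false as written. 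This is repairable --- first dispose of $o(k)\neq|C|$ so that $X=F_0\langle k\rangle$ is an honest complement isomorphic to $FC$, then work with an $X$-invariant elementary abelian section inside $NX$ --- but you do not do this, and the nonabelian chief factor case, dismissed in one clause, genuinely requires a separate (e.g.\ Frattini-type) argument.

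The serious gap is your characterization of the hard case. The norm $T$ vanishes identically on $N$ whenever $k$ acts without nonzero fixed points: for instance $p=2$, $|C|=3$, $k$ acting irreducibly on $\mathbb{F}_2^2$ gives $T=1+k+k^2=0$, with $k$ acting nontrivially and $p\nmid|C|$. So ``$T\equiv 0$ forces $k$ to act trivially on $N$ together with $p\mid|C|$'' is simply false, and the case you defer with ``I expect this final case analysis to be the main obstacle'' is not a degenerate corner but the entire point of the lemma. It is also exactly where the Frobenius hypothesis must be used: so far you have invoked the fixed-point-freeness of $C$ on $F$ only to record $C_{F_0}(k)=1$, which plays no role in your construction, and without that hypothesis the conclusion is false. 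The standard resolution passes to $V=[N,F_0]\neq 0$, where $C_V(F_0)=0$ by coprimality, and invokes the classical theorem that a Frobenius group $FC$ acting on a module $V$ in characteristic coprime to $|F|$ with $C_V(F)=0$ has $C_V(C)\neq 0$; a nonzero $v\in C_V(k)$ then gives $(vk)^{|C|}=|C|\cdot v\neq 0$ when $p\nmid|C|$, and the subcase $p\mid|C|$ still needs additional care. None of that appears in your sketch, so the proof is incomplete precisely at its core.
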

\begin{lemma}\label{hig}(\cite{hig})
Let $G$ be a finite group and $|\pi(G)|\geq3$.
If there exist prime numbers $r$, $s$, $t \in\pi(G)$, such that $\{tr, ts, rs\} \cap\pi_e(G) =\emptyset$,
then $G$ is non-solvable.
\end{lemma}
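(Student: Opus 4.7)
The plan is to prove the contrapositive: assuming $G$ is solvable with $\{r,s,t\} \subseteq \pi(G)$, I would show that at least one of $rs$, $rt$, $st$ must lie in $\pi_e(G)$.

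First, I would reduce to the three-prime situation. Since $G$ is solvable, Hall's theorem yields a Hall $\{r,s,t\}$-subgroup $H \leq G$. Because $\pi_e(H) \subseteq \pi_e(G)$, it suffices to produce an element of one of the orders $rs$, $rt$, $st$ inside $H$, so I may replace $G$ by $H$ and assume $\pi(G)=\{r,s,t\}$.

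Next, I would argue that if none of $rs$, $rt$, $st$ belongs to $\pi_e(G)$, then every element of $G$ has prime power order. Indeed, for any $g \in G$ the order $|g|$ has the form $r^a s^b t^c$; if two distinct primes among $r,s,t$ simultaneously divided $|g|$, then a suitable power of $g$ would have order exactly their product, contradicting the assumption. Hence $G$ becomes a CP-group, i.e., a group in which every element has prime power order.

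Finally, I would invoke the classical theorem of Higman on solvable CP-groups, which asserts that any solvable group of prime power element orders has at most two distinct prime divisors. Since $|\pi(G)|=3$, this immediately yields the required contradiction. The main obstacle is the appeal to Higman's classification itself: its proof requires a careful Sylow-theoretic analysis together with a fixed-point-free action argument, identifying such a group as either a $p$-group, a Frobenius group with a normal $p$-kernel and cyclic $q$-complement, or a tightly restricted $\{2,p\}$-group. Everything else in the proof is a clean reduction, so the genuine work is hidden inside that structural classification.
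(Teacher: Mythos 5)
Your proposal is correct and is exactly the standard argument behind this lemma, which the paper itself does not spell out but simply attributes to Higman's paper: pass to a Hall $\{r,s,t\}$-subgroup (which exists by solvability and inherits all three primes), observe that the hypothesis forces it to be a group in which every element has prime power order, and then contradict Higman's theorem that a solvable group with this property has at most two prime divisors. The reductions are all sound (in particular $\pi_e(H)\subseteq\pi_e(G)$ and the power-of-$g$ argument), and you correctly identify that the real content is outsourced to Higman's structural classification.
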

\begin{lemma}\label{fro}(\cite[Theorem 18.6]{pas})
Let $G$ be a nonsolvable Frobenius complement. Then $G$ has a normal subgroup $G_0$ with $|G:G_0|=1$ or $2$ such that $G_0={\rm SL}(2,5)\times M$ with $M$ a Z-group of order prime to $2$, $3$ and $5$.
\end{lemma}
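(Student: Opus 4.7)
The statement is Zassenhaus's classical structural theorem for nonsolvable Frobenius complements, so my plan is to follow the standard Sylow-theoretic analysis rather than invent a new route. The organizing principle is that the fixed-point-free action of $G$ on a Frobenius kernel forces every abelian subgroup of $G$ to be cyclic; from this rigid local structure I would extract the global $\mathrm{SL}(2,5)\times M$ decomposition.

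First I would establish the Sylow restriction: for every prime $p$, a Sylow $p$-subgroup $P$ of $G$ has all of its abelian subgroups cyclic (otherwise $P$ contains $C_p\times C_p$, yielding an element of $G$ fixing a nonzero vector in the kernel). By the classical theorem of Burnside, this forces $P$ to be cyclic when $p$ is odd and either cyclic or generalized quaternion when $p=2$. Since $G$ is nonsolvable it cannot be a Z-group (Z-groups are metacyclic, hence solvable), so the Sylow $2$-subgroup of $G$ must be generalized quaternion.

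Next I would invoke the Brauer--Suzuki theorem (or equivalently the analysis in Passman's Chapter 18): a finite group with generalized quaternion Sylow $2$-subgroup has a unique involution $z$, and $z$ is central in some normal subgroup of index at most $2$. Passing to the subgroup $G_0$ of index $\leq 2$ containing this involution centrally, I would show that $G_0$ has a normal subgroup isomorphic to $\mathrm{SL}(2,5)$: the nonsolvable composition factor of $G_0$ must be a simple group admitting a fixed-point-free action on the kernel, and the only such simple group is $\mathrm{PSL}(2,5)\cong A_5$, whose Schur cover $\mathrm{SL}(2,5)$ lifts the action because the central involution is the unique involution of $G_0$. Write $S\cong\mathrm{SL}(2,5)$ for this subgroup and set $M=C_{G_0}(S)$; the aim is to show $G_0=S\times M=SM$ with $S\cap M=1$.

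Finally I would verify the structure of $M$. Because every abelian subgroup of $G_0$ is cyclic, $M$ has all Sylow subgroups cyclic, i.e.\ $M$ is a Z-group. The orders are forced to be coprime to $2,3,5$: a common prime with $|S|$ would produce a noncyclic abelian subgroup inside $S\times M$, violating the Sylow restriction above. The identity $G_0=SM$ with $S\cap M=1$ follows from counting Sylow $2$-subgroups (both $S$ and $G_0$ have a unique, and hence shared, generalized quaternion Sylow $2$) together with a standard Frattini/centralizer argument. The main obstacle, and the step I would rely most heavily on external machinery for, is the Brauer--Suzuki input that isolates $\mathrm{SL}(2,5)$ as the only nonabelian composition factor; the remaining product decomposition is bookkeeping once the Sylow structure and this composition factor are in hand.
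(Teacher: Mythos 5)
The paper offers no proof of this lemma at all: it is quoted directly from Passman \cite[Theorem 18.6]{pas}, so there is no internal argument to measure yours against. Your opening steps are the standard (and correct) ones from that source: fixed-point-freeness on the kernel forces every abelian subgroup of a Frobenius complement to be cyclic, Burnside's theorem then gives cyclic Sylow $p$-subgroups for odd $p$ and a cyclic or generalized quaternion Sylow $2$-subgroup, and since Z-groups are metacyclic and hence solvable, nonsolvability forces the generalized quaternion case.

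The genuine gap is at the decisive step, the isolation of ${\rm SL}(2,5)$. Your stated criterion --- that the nonabelian composition factor must be ``a simple group admitting a fixed-point-free action on the kernel, and the only such simple group is ${\rm PSL}(2,5)$'' --- is false as written: \emph{no} nonabelian simple group acts fixed-point-freely, because every one of them contains a noncyclic abelian subgroup (for $A_5$, a Klein four-subgroup), which your own first step excludes. The composition factor $A_5$ occurs as a section of $G_0$, not as a subgroup acting on the kernel, so this criterion cannot identify it; and the Brauer--Suzuki theorem does not fill the hole either, since it only produces the central involution $z$ modulo the odd core and says nothing about which simple groups can appear above it. What is actually required is either Zassenhaus's original character-theoretic analysis or, in modern terms, the Brauer--Suzuki--Wall/Gorenstein--Walter classification applied to $G_0/\langle z\rangle$ (which has dihedral Sylow $2$-subgroups), followed by eliminating $A_7$ and every ${\rm PSL}(2,q)$ with $q\neq 5$ by exhibiting in the preimage a forbidden subgroup --- e.g.\ $C_3\times C_3$ for $A_7$ and $q=p^n$ with $n\geq 2$, or the nonabelian order-$21$ section of the Borel subgroup of ${\rm SL}(2,7)$. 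This elimination is the real content of the theorem and is absent from your sketch. A smaller slip: with $M:=C_{G_0}(S)$ you cannot have $S\cap M=1$, since $Z(S)=\langle -I\rangle\leq C_{G_0}(S)$; you need $M$ to be the normal Hall $2'$-part of $C_{G_0}(S)$ for the direct decomposition $G_0=S\times M$ to make sense.
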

Using \cite[Theorem A]{21}, we have the following result:
\begin{lemma}\label{wil}
Let $G$ be a finite group with $t(G)\geq 2$. Then
one of the following holds:

(a) $G$ is a Frobenius or 2-Frobenius group;

(b) there exists a nonabelian simple group $S$ such that $S\leq$
$\overline{G}$$:=G/N \leq \textrm{Aut}(S)$ for some nilpotent normal
subgroup $N$ of $G$.
\end{lemma}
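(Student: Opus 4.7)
The plan is to reconstruct the classical Gruenberg--Kegel type structure theorem, now stated in terms of the parameter $t(G)$. I would start from the solvable radical $R=R(G)$ and split the argument according as $\ov{G}:=G/R$ is trivial or not. In the first case $G$ is solvable, and the hypothesis $t(G)\geq 2$ forces $\ga(G)$ to carry a non-adjacency that, by a Hall subgroup / Fitting series analysis, pins down the structure of $G$: a chief series refinement $1\lhd F\lhd H\lhd G$ together with the non-adjacency of two primes shows that either $G/F$ acts Frobeniusly on $F$ (case Frobenius), or there is an additional layer on top, yielding a 2-Frobenius group. This handles case (a).

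In the second case, $\ov{G}$ has non-trivial socle $\mathrm{soc}(\ov{G})=S_1\times\cdots\times S_k$, each $S_i$ non-abelian simple. Here I would show $k=1$: if two distinct factors $S_i$, $S_j$ appear, one can pick primes from $\pi(S_i)$ and $\pi(S_j)$ whose product lies in $\pi_e(G)$ (since the factors commute), and combining this with elements from the remaining structure destroys every non-adjacency permitted by $t(G)\geq 2$. With $S=S_1$ simple, the standard embedding of a group into the automorphism group of its socle gives $S\leq \ov{G}\leq \mathrm{Aut}(S)$. It then remains to replace $R$ by a nilpotent normal subgroup $N$: using the action of the almost simple quotient on the Fitting series of $R$, any non-nilpotent layer would produce, via coprime action and Lemma~\ref{maza}--type commutator arguments, an element whose order joins two primes that $t(G)\geq 2$ forbids to be joined, so $R$ must be nilpotent and we set $N:=R$.

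The main obstacle, as I see it, is precisely this last reduction from \emph{solvable} to \emph{nilpotent} $N$: it is the step where one cannot simply quote the structure theorem for the socle and must invoke detailed information about how a quasi-simple section acts on $O_p(R)$ for each $p\in\pi(R)$. Everything before that step is organizational; the nilpotency of $N$ is what makes the lemma genuinely useful for later applications in the paper (where it will be applied to $G$ having $\ga(G)=\ga(\mathrm{PGL}(2,49))$, a graph with several non-adjacencies). In a write-up I would isolate this step as a sublemma and cite~\cite{21} only for its technical content, deriving the clean tripartite statement (Frobenius, 2-Frobenius, or almost-simple-mod-nilpotent) from it.
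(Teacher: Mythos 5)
The first thing to say is that the paper does not prove this lemma at all: it is quoted verbatim from Kondrat'ev's refinement of the Gruenberg--Kegel theorem (\cite[Theorem A]{21}), so your attempt is not filling in a short omitted argument but reconstructing a substantial classical theorem. Measured against the actual proof of that theorem, your outline has the right overall shape (solvable case yielding Frobenius or 2-Frobenius, non-solvable case yielding an almost simple section over a nilpotent normal subgroup), but the two steps you lean on most are argued in a way that would not go through.

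The reduction of the socle to a single simple factor is wrong as stated: you claim that an edge between $\pi(S_i)$ and $\pi(S_j)$ ``destroys every non-adjacency permitted by $t(G)\geq 2$,'' but the hypothesis $t(G)\geq 2$ (disconnectedness of $\ga(G)$) is compatible with arbitrarily many edges, so exhibiting one edge between two commuting socle factors contradicts nothing. The genuine argument fixes a prime $p$ in a component of $\ga(G)$ not containing $2$ and shows that an element of order $p$ must act fixed point freely wherever it acts (otherwise $p$ would be joined to a prime of the principal component); since each $|S_i|$ is even and distinct factors commute, $k\geq 2$ forces every prime of the socle into the component of $2$, and the isolated component must then be traced through the remaining layers --- this is where the real work lies. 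Likewise, the nilpotency of $N$ does not come from ``coprime action and commutator arguments'': the standard proof observes that an element $x$ of prime order $p$, with $p$ in a non-principal component, satisfies $C_N(x)=1$, and then invokes Thompson's theorem on fixed-point-free automorphisms of prime order. Finally, the solvable dichotomy (Frobenius versus 2-Frobenius) is asserted rather than derived; it requires the Hall-subgroup analysis of a solvable group with disconnected prime graph. As written, your proposal is a plausible table of contents for the Gruenberg--Kegel--Williams--Kondrat'ev proof rather than a proof; in the context of this paper the correct move is simply to cite \cite[Theorem A]{21}, as the author does.
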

\begin{lemma}\label{zavar}(\cite{zavar})
Let $G = L_n^{\varepsilon}(q)$, $q = p^m$, be a simple group which acts absolutely
irreducibly on a vector space $W$ over a field of characteristic $p$. Denote $H = W\leftthreetimes G$. If $n = 2$ and $q$ is odd then $2p\in\pi_e(H)$.
\end{lemma}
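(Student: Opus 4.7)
The plan is to exhibit an element of order $2p$ inside $H = W\leftthreetimes G$ directly. Since $q$ is odd, $p$ is odd and $G=\textrm{PSL}(2,q)$ contains an involution $g$. The key step is to locate a nonzero fixed vector $w\in W^g = \{v\in W:g\cdot v=v\}$. Once such a $w$ is found, a direct computation in the semidirect product gives
\[(w,g)^2 = (w + g\cdot w,\; g^2) = (2w,\; 1_G),\]
and since the characteristic $p$ is odd, $2w\neq 0$ has additive order $p$. Hence $(w,g)^2$ has order $p$ and $(w,g)$ itself has order $2p$, so $2p\in\pi_e(H)$.

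The substance of the proof is thus reduced to showing $W^g\neq 0$. Because $p$ is odd, $g$ acts semisimply on $W$ with eigenvalues $\pm 1$, so there is a decomposition $W = W^+\oplus W^-$ into $(\pm 1)$-eigenspaces. Assume for contradiction that $W^+=0$; then $g$ acts on $W$ as the central scalar $-I$. Because $-I$ lies in the center of $GL(W)$, every $G$-conjugate of $g$ must also act as $-I$. Since $G$ is nonabelian simple, the normal closure of $g$ equals $G$, and so the image of the representation $\rho:G\to GL(W)$ is contained in the abelian group $\langle -I\rangle$, which has order at most $2$. On the other hand, $W$ is a nontrivial absolutely irreducible $G$-module, and the kernel of $\rho$ is a proper normal subgroup of $G$; simplicity forces $\ker\rho = 1$, so $\rho(G)\cong G$ is nonabelian, a contradiction. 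Therefore $W^+\neq 0$, and we may pick $0\neq w\in W^+$ to complete the construction.

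The only genuine obstacle is the eigenspace nontriviality step; it uses absolute irreducibility to exclude the degenerate trivial module case and then leverages simplicity of $G$ together with the centrality of $-I$ in $GL(W)$. Once $W^g\neq 0$ is in hand, the element $(w,g)$ of order $2p$ is produced by the one-line semidirect-product calculation above.
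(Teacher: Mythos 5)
Your proof is correct, and it is a genuinely different route from the paper's: the paper gives no argument at all for this lemma, quoting it directly from Zavarnitsine \cite{zavar}, where it is a special case of a general theorem on fixed points of prime-order elements of $L_n^{\varepsilon}(q)$ acting on defining-characteristic modules. Your reduction to producing $0\neq w\in W^{g}$ for an involution $g$ is the right move, and the delicate step is handled soundly: since $p$ is odd, $\rho(g)$ is semisimple with spectrum in $\{\pm 1\}$, and if the fixed space vanished then $\rho(g)=-I$ would be central in $\mathrm{GL}(W)$, so all conjugates of $g$ would have the same image and the normal closure of $g$ (which is all of the simple group $G=\mathrm{PSL}(2,q)$) would map into the abelian group $\langle -I\rangle$; this is impossible whether $\ker\rho$ is trivial or not, since $\rho(g)=-I\neq I$ rules out the trivial representation. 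The order computation is also complete: $(w,g)^{2}=(2w,1)$ has order $p$, so the order of $(w,g)$ is $p$ or $2p$, and it is not $p$ because $(w,g)^{p}$ has second coordinate $g^{p}=g\neq 1$. Worth noting: your argument never actually uses absolute irreducibility (or even irreducibility) of $W$ --- it works for any nonzero module in odd characteristic. That hypothesis is what Zavarnitsine's general lemma needs for larger $n$ and for elements of odd prime order, where controlling the fixed space is far subtler; for the $n=2$, $q$ odd case your elementary involution argument buys a short, self-contained proof at no cost in generality for the application in this paper.
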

\section{Main Results}
\begin{lemma}\label{l2}
There are infinitely many finite Frobenius group $G$ such that $\ga(G)=\ga({\rm PGL}(2,49))$.
\end{lemma}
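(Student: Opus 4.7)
First, I read off $\ga(\textrm{PGL}(2,49))$: since the element orders of $\textrm{PGL}(2,49)$ divide one of $7$, $q-1=48=2^4\cdot 3$ or $q+1=50=2\cdot 5^2$, the vertex set is $\pi(\textrm{PGL}(2,49))=\{2,3,5,7\}$ and the edges are exactly $\{2,3\}$ (from an element of order $6$) and $\{2,5\}$ (from an element of order $10$), with $7$ isolated and $3\not\sim 5$.

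The plan is to exhibit, for each $n\geq 1$, a Frobenius group $G_n$ with complement $\textrm{SL}(2,5)$ and a non-trivial $7$-group kernel. The group $\textrm{SL}(2,5)$ has order $120=2^3\cdot 3\cdot 5$, is a Frobenius complement (take $M=1$ in Lemma \ref{fro}), and has element orders $\{1,2,3,4,5,6,10\}$: crucially, orders $6$ and $10$ occur but $15$ does not. Since in any Frobenius group the element orders decompose into kernel-orders and complement-orders with no mixing, once such $G_n$ are constructed their prime graph will match $\ga(\textrm{PGL}(2,49))$ automatically.

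To build the kernels I use the classical embedding $\rho\colon \textrm{SL}(2,5)\hookrightarrow\textrm{SL}(2,\mathbb{F}_{49})$. This exists because $5$ is a quadratic non-residue modulo $7$, so $\sqrt 5\in\mathbb{F}_{49}$, and the faithful $2$-dimensional complex representation of $\textrm{SL}(2,5)$ (with character field $\mathbb{Q}(\sqrt 5)$) descends to $\mathbb{F}_{49}$. For each $n\geq 1$, set $V_n=(\mathbb{F}_{49}^2)^n$ with $\textrm{SL}(2,5)$ acting diagonally via $\rho$. I would then verify fixed-point-freeness by observing that every non-identity $g\in\textrm{SL}(2,5)$ has order coprime to $7$, so $\rho(g)$ is semisimple over the algebraic closure with eigenvalues that are non-trivial roots of unity of orders in $\{2,3,4,5,6,10\}$; none equals $1$, hence $\det(\rho(g)-I)\neq 0$, and $\rho(g)$ fixes only the zero vector in $\mathbb{F}_{49}^2$, and by the diagonal action, only the zero vector in $V_n$.

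Therefore each $G_n:=V_n\rtimes\textrm{SL}(2,5)$ is a Frobenius group with $|G_n|=120\cdot 7^{4n}$ and, by the paragraph above, with $\ga(G_n)=\ga(\textrm{PGL}(2,49))$; since the orders $|G_n|$ are strictly increasing in $n$, the $G_n$ are pairwise non-isomorphic. The only non-routine step is establishing $\rho$ together with its FPF property, both of which are classical (essentially the existence of the $2$-dimensional projective representation of the binary icosahedral group in characteristic $7$); I foresee no substantial obstacle.
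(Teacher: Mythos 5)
Your construction is correct and is essentially the paper's own: both realize $\mathrm{SL}(2,5)$ inside $\mathrm{GL}(2,F)$ for a field $F$ of characteristic $7$ containing $\sqrt{5}$, let it act fixed-point-freely on a direct sum of copies of the natural $2$-dimensional module (the paper obtains infinitely many examples by enlarging the base field, you by increasing the number of summands over $\mathbb{F}_{49}$), and then read off that the resulting Frobenius groups $V\rtimes \mathrm{SL}(2,5)$ have $7$ isolated, $2\sim 3$, $2\sim 5$ and $3\not\sim 5$, matching $\ga(\mathrm{PGL}(2,49))$. The fixed-point-free step is sound for the reason you indicate (a nontrivial semisimple element of $\mathrm{SL}_2$ has eigenvalues $\lambda,\lambda^{-1}$, so an eigenvalue $1$ would force the element to be the identity), so there is no gap.
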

\begin{proof}
Let $F$ be a finite field of characteristic $7$. Also let there are some elements $\alpha$ and $\beta$ included in $F$ such that $\alpha^2=-1$ and $\beta^2=5$. We know that such a finite filed exists and moreover there are infinitely many filed with these properties.

Now we construct some linear groups as follow:
\begin{equation*}
C := \langle
\left(\begin{array}{ccc}
-1 & 1 & 0 \\
-1 & 0 & 0 \\
0 & 0 & 1
\end{array} \right),
\left(\begin{array}{ccc}
0 & \alpha  &0\\
\alpha & \frac{\beta+1}{2} & 0 \\
0 & 0 & 1
\end{array}\right),
\left(\begin{array}{ccc}
-1 & 0  &0\\
0 & -1 & 0 \\
0 & 0 & 1
\end{array}\right)
\rangle,
\end{equation*}

\begin{equation*}
K := \langle
\left(
\begin{array}{ccc}
0 & 0 & 1 \\
0 & 0 & 0 \\
0 & 0 & 1
\end{array} \right),
\left(
\begin{array}{ccc}
0 & 0 & 0 \\
0 & 0 & 1 \\
0 & 0 & 1
\end{array} \right)
\rangle.
\end{equation*}
By the above definition, $C\cong\langle x,y,z|x^3=y^5=z^2=1, x^z=z, y^z=y, (xy)^2=z\rangle$. This implies that $C\cong {\rm SL}(2,5)$. Also we have $K\cong F\oplus F$, is a direct sum of additive group $F$ by itself. This means $K$ is isomorphic to a vector space of dimension $2$ over $F$ and so $|K|=|F|^2$. It is obvious that $C$ belongs to the normalizer of $K$ in ${\rm GL}(3,F)$.

Now we define $G:=K\rtimes C$. Since $K$ is an elementary abelian $7$-group, it is easy to prove that $C$ acts fixed point freely on $K$ by conjugation. Hence $G$ is a Frobenius group with kernel $K$ and complement $C$. This implies that in the prime graph of $G$, $7$ is an isolated vertex. Also by $\ga({\rm SL}(2,5))$, we get that $2$ is adjacent to $3$ and $5$ and there is no edge between $3$ and $5$ in $\ga(G)$. Therefore, $\ga(G)$ coincides to $\ga({\rm PGL}(2,49))$, which completes the proof.
\end{proof}
\begin{lemma}\label{l3}
Each following group $G$ is an almost simple group related to the simple group $S$. Moreover, $G$ has a prime graph which coincide with the prime graph of the almost simple group ${\rm PGL}(2,49)$:

(1) $G=S_7$ and $S=A_7$.

(2) $G=U_4(3)\cdot 2$ and $S=U_4(3)$.

(3) $G=U_3(5)$ or $G=U_3(5)\cdot2$ and $S=U_3(5)$.
\end{lemma}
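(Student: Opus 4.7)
\medskip
\noindent\textbf{Proof proposal.} The plan is to first pin down $\Gamma(\mathrm{PGL}(2,49))$ explicitly and then verify in each of the three cases that $G$ is almost simple with the claimed socle $S$ and has the same graph.

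First I would determine $\Gamma(\mathrm{PGL}(2,49))$. Since $|\mathrm{PGL}(2,49)| = 2^{5} \cdot 3 \cdot 5^{2} \cdot 7^{2}$, we have $\pi(\mathrm{PGL}(2,49)) = \{2,3,5,7\}$. For $q$ odd, the maximal element orders of $\mathrm{PGL}(2,q)$ are $q$, $q-1$ and $q+1$, so here $\mu(\mathrm{PGL}(2,49)) = \{49,48,50\}$. Since $\gcd(49,48\cdot 50)=1$, the vertex $7$ is isolated; the factorisations $48 = 2^{4}\cdot 3$ and $50 = 2\cdot 5^{2}$ yield the edges $2\sim 3$ and $2\sim 5$; and $3\not\sim 5$ because $15$ divides none of $48,49,50$. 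So $\Gamma(\mathrm{PGL}(2,49))$ is the graph on $\{2,3,5,7\}$ with edge set $\{\{2,3\},\{2,5\}\}$ and $7$ isolated.

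Next I would handle the three items separately. For (1), $S_{7}=\mathrm{Aut}(A_{7})$ is almost simple with socle $A_{7}$, and an element of $S_{7}$ has order equal to the lcm of the parts of its cycle type, a partition of $7$. Enumerating partitions of $7$ gives element orders $\{1,2,3,4,5,6,7,10,12\}$; the partitions $(3,2,1)$ and $(5,2)$ produce the edges $2\sim 3$ and $2\sim 5$, while $(7)$ is the only partition with a part equal to $7$, so $7$ is isolated. No partition of $7$ has lcm divisible by $15$, $21$ or $35$, so there are no further edges.

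For (2) and (3) I would appeal directly to the \textsc{Atlas} of finite groups. The orders $|U_{4}(3)| = 2^{7}\cdot 3^{6}\cdot 5\cdot 7$ and $|U_{3}(5)| = 2^{4}\cdot 3^{2}\cdot 5^{3}\cdot 7$ give $\pi(G)=\{2,3,5,7\}$ in every listed case, and the extensions $U_{4}(3)\cdot 2$, $U_{3}(5)\cdot 2$ are almost simple with the stated socles by construction. I would then read off the spectra and check that each group contains elements of orders $6$ and $10$ (forcing $2\sim 3$ and $2\sim 5$) but no elements of orders $14$, $15$, $21$ or $35$ (so that $7$ remains isolated and $3\not\sim 5$). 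The main bookkeeping obstacle is in case (2): the \textsc{Atlas} lists three non-isomorphic extensions $U_{4}(3)\cdot 2_{1}$, $U_{4}(3)\cdot 2_{2}$, $U_{4}(3)\cdot 2_{3}$ with slightly different spectra, so one must specify which extension is meant and verify the element orders for that particular group; the verifications themselves are purely a check against the \textsc{Atlas} character tables.
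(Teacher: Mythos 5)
Your approach is essentially the paper's: the paper's entire proof of this lemma is a one-line appeal to the \textsc{Atlas}, and you simply carry out that verification in detail (with the $S_7$ case done by hand via cycle types). One small correction: $\mathrm{PGL}(2,49)$ has no element of order $49$, since its Sylow $7$-subgroups are elementary abelian, so $\mu(\mathrm{PGL}(2,49))=\{7,48,50\}$ rather than $\{49,48,50\}$ --- this does not change the resulting prime graph --- and your remark that one must specify which of the three extensions $U_4(3)\cdot 2_1$, $U_4(3)\cdot 2_2$, $U_4(3)\cdot 2_3$ is meant is a legitimate point that the paper glosses over.
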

\begin{proof}
Using \cite{atlas}, it is straightforward.
\end{proof}
\begin{theorem}\label{tm}
Let $G$ be a finite group with the prime graph as same as the prime graph of $\emph{PGL}(2,49)$. Then $G$ is isomorphic to
one of the following groups:

(1) A Frobenius group $K\rtimes C$, such that $K$ is a $7$-group and $C$ contains a subgroup $C_0$ whose index in $C$ is at most $2$ and $C_0$ is isomorphic to ${\rm SL}(2,5)$.

(2) On of the almost simple group: $S_7$, $U_4(3)\cdot 2$, $U_3(5)\cdot2$ or ${\rm PGL}(2,49)$.

(3) The simple group: $U_3(5)$.

In particular, ${\rm PGL}(2,49)$ is unrecognizable by prime graph.
\end{theorem}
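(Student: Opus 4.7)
The plan is to first compute $\Gamma(\textrm{PGL}(2,49))$ explicitly. Since the spectrum of $\textrm{PGL}(2,q)$ consists of the divisors of $q$, $q-1$ and $q+1$, taking $q=49$ yields $\pi(\textrm{PGL}(2,49))=\{2,3,5,7\}$ with the only adjacencies being $2\sim 3$ (realized by order $6$) and $2\sim 5$ (realized by order $10$); in particular $7$ is isolated and $t(G)=2$. I can therefore invoke Lemma \ref{wil} and split the argument into (a) $G$ is Frobenius or $2$-Frobenius, and (b) there exists a non-abelian simple group $S$ with $S\leq G/N\leq \textrm{Aut}(S)$ for some nilpotent $N\lhd G$.

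In case (a), first suppose $G=K\rtimes C$ is Frobenius. Because $\pi(K)$ and $\pi(C)$ are unions of connected components, one of them must equal $\{7\}$. If $\pi(K)=\{2,3,5\}$, nilpotence of $K$ forces a cyclic subgroup of order $15$, contradicting the missing edge $3\sim 5$; hence $\pi(K)=\{7\}$ and $\pi(C)=\{2,3,5\}$. If $C$ were solvable, Burnside's theorem on Frobenius complements (subgroups of order $pq$ are cyclic) gives a cyclic Hall $\{3,5\}$-subgroup of order $15$, again creating the forbidden edge. Thus $C$ is non-solvable and Lemma \ref{fro} provides $C_0\lhd C$ with $[C:C_0]\leq 2$ and $C_0\cong\textrm{SL}(2,5)\times M$ with $\gcd(|M|,30)=1$; but $\pi(C_0)\subseteq\{2,3,5\}$ forces $M=1$, yielding conclusion (1). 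For the $2$-Frobenius case $1\lhd H\lhd K\lhd G$, the middle factor $K/H$ is cyclic and its prime set coincides with a full component; the option $\pi(K/H)=\{2,3,5\}$ gives the forbidden order-$15$ element, so $\pi(K/H)=\{7\}$, and then $G/K$ embeds into $\textrm{Aut}(K/H)$ with $\pi(G/K)\subseteq\{2,3\}$. Combined with nilpotence of $H$, the constraint $\{3,5\}\not\subseteq\pi(H)$ forces $5\in\pi(H)$ and $3\in\pi(G/K)$; Lemma \ref{maza} applied to the Frobenius quotient $G/H$ (whose kernel is $K/H$, with $(|K/H|,|H|)=1$ and the fpf action ensuring $K/H\not\leq HC_G(H)/H$) produces an element of order $p|G/K|$ for some $p\in\pi(H)$, and tracking element orders across the three layers will create a forbidden adjacency in $\Gamma(G)$.

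In case (b), $\pi(S)\subseteq\{2,3,5,7\}$ places $S$ on the explicit list of $K_3$- and $K_4$-simple groups with prime spectrum contained in $\{2,3,5,7\}$: the alternating $A_n$ for $5\leq n\leq 10$, together with $L_2(7)$, $L_2(8)$, $L_2(49)$, $L_3(4)$, $U_3(3)$, $U_3(5)$, $U_4(2)$, $U_4(3)$, $S_4(7)$, $S_6(2)$, $O_8^+(2)$, and $J_2$. For each candidate I compare $\Gamma(S)$ (and that of its almost-simple extensions) with $\Gamma(\textrm{PGL}(2,49))$: $A_n$ for $n\geq 8$, together with $L_3(4)$, $S_4(7)$, $S_6(2)$, $O_8^+(2)$, and $J_2$, contain an element of order $15$ and are eliminated; $L_2(7)$, $L_2(8)$, $U_3(3)$, $U_4(2)$, $A_5$, and $A_6$ fail because either a required vertex is missing or the edge $2\sim 5$ is not realized. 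This leaves $S\in\{A_7,U_3(5),U_4(3),L_2(49)\}$, and the ATLAS check already recorded in Lemma \ref{l3} identifies precisely the almost-simple extensions matching the graph: $S_7$, $U_3(5)$, $U_3(5)\cdot 2$, $U_4(3)\cdot 2$, and $\textrm{PGL}(2,49)$. Finally, I must show $N=1$: for any prime $p\in\pi(N)$, a minimal $G$-invariant elementary abelian $p$-section of $N$ carries a non-trivial action of $S$, and either Lemma \ref{zavar} (in particular with $S=\textrm{PSL}(2,49)$ and $p=7$, producing an element of order $14$ and the forbidden edge $2\sim 7$) or Lemma \ref{maza} (for the remaining configurations, yielding a forbidden edge via the induced Frobenius action on the quotient) gives a contradiction.

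The main obstacle I anticipate is this last step of uniformly eliminating a non-trivial nilpotent kernel $N$ across the four surviving socles and all admissible primes $p\in\pi(N)$: one must verify, case by case, that every chief $p$-factor supports an action forcing a forbidden edge in the prime graph. Once $N=1$ is established, the classification is complete, and combining the explicit almost-simple list with the infinite family of Frobenius groups from Lemma \ref{l2} exhibits non-isomorphic groups realizing $\Gamma(\textrm{PGL}(2,49))$, which establishes the unrecognizability of $\textrm{PGL}(2,49)$ by its prime graph.
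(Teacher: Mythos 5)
Your overall strategy mirrors the paper's: determine $\Gamma(\textrm{PGL}(2,49))$ (components $\{7\}$ and $\{2,3,5\}$ with $3\not\sim 5$), apply Lemma~\ref{wil}, treat the Frobenius case via Lemma~\ref{fro}, and run through the simple groups with $\pi(S)\subseteq\{2,3,5,7\}$. But there are concrete gaps. First, you never establish that $G$ is non-solvable. The paper does this at the outset by applying Lemma~\ref{hig} to the independent set $\{3,5,7\}$: a solvable $G$ would have a Hall $\{3,5,7\}$-subgroup, forcing an edge among these three primes. That single observation disposes of the entire $2$-Frobenius case instantly (every $2$-Frobenius group is solvable) and also of the solvable-complement subcase of the Frobenius case. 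Your substitute for the $2$-Frobenius case ends with ``tracking element orders across the three layers will create a forbidden adjacency,'' which is not an argument; as written, that case is not closed.

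Second, two of your eliminations rest on false element-order claims: $L_3(4)$ has spectrum $\{1,2,3,4,5,7\}$ and contains no element of order $15$ (its prime graph is edgeless, so it must be excluded by a different route, e.g.\ by analyzing where the required edges $2\sim 3$ and $2\sim 5$ could arise in the kernel and in the outer automorphism classes), and $S_4(7)$ likewise has no element of order $15$ --- it is excluded because $21$ and $28$ lie in $\pi_e(S_4(7))$, so $7$ fails to be isolated. (The paper itself silently omits both candidates from its case analysis, so you are right to list them, but your stated reason fails.) Third, the reduction to a trivial nilpotent kernel for the socles $A_7$, $U_3(5)$, $U_4(3)$ is only announced, not carried out; you correctly flag it as the main obstacle, but the proposal leaves it open (the paper performs this step only for $L_2(49)$, using the non-cyclic Sylow $7$-subgroup to rule out primes $r\neq 7$ in the kernel). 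On the positive side, you correctly discard $U_3(3)$, which the paper's own proof erroneously retains even though no group with that socle appears in the theorem's conclusion.
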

\begin{proof}
By \cite[Lemma 7]{mogh-shi}, it follows that $\mu(\textrm{PGL}(2, 49)) =\{7, 48, 50\}$.
Hence, the connected components of the prime graph of $\textrm{PGL}(2, 49)$ are exactly $\{7\}$ and $\{2,3,5\}$. Also by
$\mu(\textrm{PGL}(2, 49))$, there is no edge between $3$ and $5$ in $\ga(\textrm{PGL}(2, 49))$. Now since $\ga(G)=\ga($\textrm{PGL}(2, 49)$)$, we deduce that these relations hold in the prime graph of $G$.

First we claim that $G$ is not solvable. On the contrary, let $G$ be a solvable group. So there is a Hall $\{3,5,7\}$-subgroup in $G$, say $H$. On the other hand $\{3,5,7\}$ is an independent subset of $\ga(G)$, which is a contradiction by Lemma~\ref{hig}. Therefore, $G$ is not solvable and so by Lemma~\ref{wil}, either $G$ is a Frobenius group or there is a nonabelian simple group $S$ such that
$S\leq \bar{G}:=G/{\rm Fit}(G)\leq {\rm Aut}(S)$.

Let $G$ be a Frobenius group with kernel $K$ and complement $C$. By Lemma~\ref{fro}, we know that $K$ is nilpotent and $\pi(C)$ is a connected component of the prime graph of $G$. Hence we conclude that $\pi(K)=\{7\}$ and $\pi(C)=\{2,3,5\}$, since $7$ is an isolated vertex in $\ga(G)$.
Hence if $C$ is solvable, then $G$ is a solvable which is a contradiction by the above argument.

Thus we suppose that $C$ is non-solvable. Then by Lemma~\ref{fro}, the complement $C$ has a normal subgroup $C_0$ with index at most $2$ which is isomorphic to ${\rm SL}(2,5)\times M$, where $\pi(M)\cap\{2,3,5\}=\emptyset$. On the other hand,
by the previous argument, we know that $\pi(C)=\{2,3,5\}$. This implies that $M=1$ and so $C_0\cong {\rm SL}(2,5)$.
Also by Lemma~\ref{l2}, we know that this such Frobenius complement exists. Hence $G$ can be isomorphic to
a Frobenius group $K:C$, where $K$ is a $7$-subgroup and $C$ contains a subgroup isomorphic to ${\rm SL}(2,5)$ whose index is at most $2$, Therefore if $G$ is a Frobenius group, then we get Case (1).

Now we assume that $G$ is neither Frobenius nor $2$-Frobenius group. Hence by Lemma~\ref{wil}, there exists a nonabelian simple group $S$ such that:
$$S\leq \overline{G}:=G/K\leq \textrm{Aut}(S)$$
in which $K$ is the Fitting subgroup of $G$. Since $\{2,7\}$ is an independent subset of $\ga(G)$, by Lemma~\ref{wil}, we conclude that $7\in\pi(S)$ and $7\not\in \pi(K)\cup\pi(\bar{G}/S)$. Also we know that $\pi(S)\subseteq\pi(G)$.
Since $\pi(G)=\{2,3,5,7\}$, so by \cite[Table 8]{mogh}, we get that $S$ is isomorphic to $A_7$, $A_8$, $A_9$, $A_{10}$, $S_6(2)$, $O^+_8(2)$, $L_3(2^2)$, $L_2(2^3)$, $U_3(3)$, $U_4(3)$, $U_3(5)$, $L_2(7)$, $S_4(7)$, $L_2(7^2)$ or $J_2$.
Now we consider each possibility for the simple group $S$.

Let $S\cong L_2(7)$. Then $5\in \pi(K)$, since $5\not\in(\pi(S)\cup\pi(\bar{G}/S))$. On the other hand $S$ contains a $\{3,7\}$-subgroup $H$. Hence $G$ has a subgroup isomorphic to $K_5:H$ where $K_5$ is $5$-group. On the other hand $K_5:H$ is solvable and so there is an edge between to prime numbers in $\ga(K_5:H)$, which is impossible since $\ga(K_5:H)$ is a subgraph of $\ga(G)$. Thus $S\not\cong L_2(7)$.

Let $S\cong L_2(2^3)$. In this case, $5\in\pi(K)$. Also we know that $S$ contains a Frobenius group isomorphic to $8:7$. Hence by Lemma~\ref{maza}, we get that $G$ has an element order $5\cdot7$, which is a contradiction.

Let $S\cong A_8$, $A_9$ or $A_{10}$. Thus $S$ consists an element of order $3\cdot5$, which contradicts to the prime graph of $G$.

Let $S\cong J_2$, $O^+_8(2)$ or $S_6(2)$. In this case $S$ contains an element of order $15$, which is a contradiction.

By Lemma~\ref{l3}, the finite group $S$ can be isomorphic to each simple group $A_7$, $U_3(3)$, $U_4(3)$ and $U_3(5)$.

Let $S$ be isomorphic to ${\rm PSL}_2(49)$. Hence ${\rm PSL}_2(49)\leq \bar{G}\leq {\rm Aut}({\rm PSL}_2(49))$.

Let $\pi(K)$ contains a prime $r$ such that $r\not=7$. Since $K$ is nilpotent, we may assume that $K$ is a vector space over a field with $r$ elements (analogous to the proof of Lemma~\ref{mainl1}). Hence the prime graph of the semidirect product $K\rtimes {\rm PSL}_2(49)$ is a subgraph of $\ga(G)$.
Let $B$ be a Sylow $7$-subgroup of ${\rm PSL}_2(49)$. We know that $B$ is not cyclic. On the other hand $K\rtimes B$ is a Frobenius group such that $\pi(K\rtimes B)=\{r,7\}$. Hence $B$ should be cyclic which is a contradiction. This implies that $K=1$, since $7\not\in\pi(K)$.

We know that ${\rm Aut}({\rm PSL}_2(49))\cong Z_2\times Z_2$. Since in the prime graph of ${\rm PSL}_2(49)$ there is not any edge between $7$ and $2$, we get that $G\not\cong {\rm PSL}_2(49)$. Also if $G={\rm PSL}_2(49):\langle \theta\rangle$, where
$\theta$ is a field automorphism, then we get that $2$ and $7$ are adjacent in $G$, which is a contradiction.
If $G={\rm PSL}_2(49):\langle \gamma\rangle$, where
$\gamma$ is a diagonal-field automorphism, then we get that $G$ does not contain any element with order $2\cdot7$ (see \cite[Lemm 12]{pgl-spec}), which is contradiction, since in $\ga(G)$, $2\sim 7$. This argument shows that $G\cong {\rm PSL}_2(49)$, which completes the proof.
\end{proof}
\begin{problem}
Let $G=\textrm{PGL}(2, q)$ be an almost simple group related to the simple group $\textrm{PSL}(2, q)$. Find all
Frobenius group $H$ such that $\ga(H)=\ga(G)$.
\end{problem}

\end{document}